\newtheorem{definition}{Definition}[section]
\newtheorem{theorem}[definition]{Theorem}
\newtheorem{lemma}[definition]{Lemma}
\newcommand{\F}{\mathbb{F}}
\newcommand{\fq}{\mathbb{F}_q}
\newcommand{\rmv}[1]{}
\title{value sets of polynomial maps over finite fields}
\author{Gary L. Mullen, Daqing Wan, and Qiang Wang}
\thanks{Research of the authors was partially supported
by NSF and  NSERC of Canada}
\address{Department of Mathematics\\
 The Pennsylvania State University\\
 University Park, PA 16802}
\email{ mullen@math.psu.edu}
\address{Department of Mathematics\\
University of California\\
Irvine,  CA 92697-3875}
\email{dwan@math.uci.edu}
\address{School of Mathematics and Statistics\\
Carleton University\\
1125 Colonel By Drive,\\
Ottawa, ON K1S 5B6\\
Canada}
\email{wang@math.carleton.ca}
\keywords{\noindent polynomials, value sets, permutation polynomials, finite fields}
\subjclass[2000]{11T06}
\begin{document}

\begin{abstract}
We provide upper bounds for the cardinality of the value set of a polynomial map in several variables 
over a finite field. These bounds generalize earlier bounds for univariate polynomials.
\end{abstract}

\maketitle

\section{Introduction}

Let $\F_q$ be a finite field of $q$ elements with characteristic $p$.
The {\it value set} of a polynomial $f$ over $\fq$ is the set $V_f$ of images  when we view $f$ as a mapping from $\fq$ to itself. Clearly $f$ is a {\it permutation polynomial (PP)} of $\fq$ if and only if the 
cardinality $|V_f|$ of the value set of $f$ is $q$.   As a consequence of the Chebotarev density theorem, 
Cohen \cite{Cohen:70} proved that for fixed  integer $d\geq 1$, there is a finite set $T_d$ of positive rational numbers such that: for any $q$ and any $f\in \fq[x]$ of degree $d$, there is an element $c_f\in T_d$ with $|V_f| = c_f q + O_d(\sqrt{q})$. In particular, when $q$ is sufficiently large compared to $d$, the set of ratios $\frac{|V_f|}{q}$ is contained in a subset of the interval $[0, 1]$ having arbitrarily small measure.
It is therefore natural to ask how the sizes of value sets are explicitly distributed, and also how polynomials are distributed in terms of value sets. 
For example, there are several results on bounds of the cardinality of value sets if $f$ is not a PP over $\fq$;  Wan \cite{W1} proved that $|V_f| \leq q- \lceil (q-1)/d\rceil$ and  Guralnick and Wan \cite{GW} also proved that if $(d, q) =1$ then $|V_f| \leq (47/63) q+ O_d(\sqrt{q})$.  Some progress on lower bounds of $|V_f|$ can be found in \cite{DasMul,WSC}, as well as 
{\it minimal value set polynomials}  that are polynomials satisfying 
 $|V_f| = \lceil q/d \rceil$ \cite{Carlitzetal:61,GomezMadden:88, Mills:64}.
All of  these results relate $|V_f|$ to the degree $d$ of the polynomial. 
Algorithms and complexity in computing $|V_f|$ have been studied recently, see \cite{CHW}.

Let $f: {\F}_q^n\rightarrow {\F}_q^n$ be a 
polynomial map in $n$ variables defined over ${\F}_q$, where $n$ is a positive integer.
In Section~\ref{degree} we extend Wan's result on upper bounds of value sets for univariate polynomials in \cite{W1}
to polynomial maps in $n$ variables.  Denote by $\vert V_f\vert $ the number of 
distinct values taken by $f(x_1, \dots, x_n)$ as $(x_1, \dots, x_n)$ runs over ${\F}_q^n$. Following the approach of studying 
value set problems in terms of the degree of a polynomial, we give  an upper bound of $\vert V_f\vert $  in terms of the total degree of the multivariate polynomial $f$ over $\fq$ in Theorem~\ref{multiValue1}.  In particular, this answers an open problem raised by Lipton \cite{Li} in his computer 
science blog.

\section{Value sets of polynomial maps in several variables}\label{degree}

In this section, we let $f: {\F}_q^n\rightarrow {\F}_q^n$ be a 
polynomial map in $n$ variables defined over ${\F}_q$, where $n$ is a positive integer.
We give a simple upper bound for the number $\vert V_f\vert $ of 
distinct values taken by $f(x_1, \dots, x_n)$ as $(x_1, \dots, x_n)$ runs over ${\F}_q^n$ when
$f$ does  not induce a permutation map.

We  write $f$ as a polynomial vector: 
\begin{equation}\label{defpolyvec}
f(x_1,\dots, x_n)=(f_1(x_1,\dots, x_n), \dots, f_n(x_1,\dots, x_n)), 
\end{equation}
where each $f_i$ ($1\leq i\leq n$) is a polynomial in $n$ variables 
over ${\F}_q$. The polynomial vector $f$ induces a map 
from ${\F}_q^n$ to ${\F}_q^n$. By reducing the polynomial vector $f$ 
modulo the ideal
$(x_1^q-x_1,\dots, x_n^q-x_n)$, we may assume that
the degree of $f_i$ in each variable is at most $q-1$ and we may
further assume that $f$ is a non-constant map to avoid the trivial case.
Let $d_i$ denote  
the total degree of $f_i$ in the $n$ variables $ x_1,\dots, x_n$
and let $d=\max_id_i$. Then $d$ satisfies 
$1\leq d\leq n(q-1)$. Let $\vert V_f\vert $ be the cardinality
of the \emph {value set} $V_f = \{ f(x_1,\dots, x_n) | 
(x_1,\dots, x_n)\in {\F}_q^n \}$. It is clear that
$\vert V_f \vert \leq q^n  $. If $\vert V_f\vert  =q^n $, then $f$ is a \emph {permutation
polynomial vector}, see \cite[Chapter 7]{LN}. If $\vert V_f\vert < q^n $,  we prove the following:

\begin{theorem} \label{multiValue1}
Assume that  $\vert V_f \vert <q^n$. Then
\begin{equation}\label{upperboundD}
 \vert V_f \vert \leq q^n - \min\{ {n(q-1)\over d}, q\}. 
 \end{equation}
\end{theorem}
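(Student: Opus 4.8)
The claim is equivalent to $s := q^n-|V_f|\ge \min\{n(q-1)/d,\,q\}$, and since $n(q-1)/d\le q$ precisely when $d\ge n(q-1)/q$, I would split into the two regimes (i)~$d$ large, target $s\ge n(q-1)/d$, and (ii)~$d$ small, target $s\ge q$. After reducing each $f_i$ modulo $(x_1^q-x_1,\dots,x_n^q-x_n)$ I may assume $\deg_{x_j}f_i\le q-1$ and, discarding the constant case, that $1\le s$. The overall strategy is to imitate Wan's univariate argument \cite{W1}, with the total degree of the multivariate $f_i$ playing the role of the degree of a univariate polynomial.

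The arithmetic engine is the elementary observation that if $\vec k=(k_1,\dots,k_n)\in\mathbb Z_{\ge 0}^n$ satisfies $d(k_1+\cdots+k_n)<n(q-1)$, then $\sum_{\vec x\in\mathbb F_q^n}\prod_{i=1}^n f_i(\vec x)^{k_i}=0$, because $\prod_i f_i^{k_i}$ has total degree $<n(q-1)$ and a polynomial of total degree $<n(q-1)$ sums to $0$ over $\mathbb F_q^n$ (a monomial $\prod x_j^{a_j}$ contributes a nonzero sum only if every $a_j$ is a positive multiple of $q-1$, forcing total degree $\ge n(q-1)$). Writing $m_{\vec v}=|f^{-1}(\vec v)|$, this says that the ``moments'' $\sum_{\vec v}m_{\vec v}\,\vec v^{\,\vec k}$ vanish in $\mathbb F_q$ for all $\vec k$ with $k_1+\cdots+k_n\le \lceil n(q-1)/d\rceil-1$. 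From here I would, as in the univariate case, package the value set into the roots of a single auxiliary polynomial $F(T)$ (for $n=1$ this is $\prod_{v\in V_f}(T-v)$, which divides $T^q-T$ and for which $\prod_v(f(x)-v)\equiv 0\pmod{x^q-x}$), use the vanishing of the low‑weight moments to force the high‑degree coefficients of $F$ (its elementary symmetric functions, via Newton's identities, equivalently $F'/F=O(T^{-\lceil n(q-1)/d\rceil-1})$ at infinity) to vanish, and then extract the bound $|V_f|\le q^n-\lceil n(q-1)/d\rceil$ from the divisibility of the reduction of $F$ modulo $T^q-T$ by the polynomial vanishing on the attained values. In regime (ii), where the weaker target $s\ge q$ is needed, I expect a direct degree‑by‑degree argument: for $d=1$ the image of a non‑surjective $f$ is an affine subspace of dimension $\le n-1$, so $s\ge q^{n-1}(q-1)\ge q$, and small $d$ should be handled similarly (or absorbed into the same machinery, since the number of vanishing moments is then very large).

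The step I expect to be the genuine obstacle is the passage from the multivariate moment information back to an honest \emph{upper} bound on $|V_f|$: in one variable this is the divisibility of the reduced generating polynomial by $\prod_{v\in V_f}(T-v)$, and reproducing it in $n$ variables requires either encoding $V_f$ in one new variable without degrading the sharp threshold $n(q-1)/d$ (a naive reduction to $\mathbb F_{q^n}$ inflates the degree to roughly $q^{n-1}d$ and loses the bound), or doing the elementary‑symmetric‑function bookkeeping directly in $\mathbb F_q[Y_1,\dots,Y_n]$. A secondary technical nuisance is that Newton's identities over $\mathbb F_q$ only give $e_j=0$ for $p\nmid j$, so for small $p$ one must supplement them (for instance with the Frobenius relation $S_{pk}=S_k^{\,p}$ among the power sums); working instead with $n(q-1)/d$ rather than $\lceil n(q-1)/d\rceil$, as in the statement, is exactly what lets one sidestep the sharp form of this point.
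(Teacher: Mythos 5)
There is a genuine gap, and it sits exactly where you predicted: the passage from the vanishing of low-weight moments to an upper bound on $\vert V_f\vert$. Your ``arithmetic engine'' only gives the vanishing of $\sum_{\vec x}\prod_i f_i(\vec x)^{k_i}$ \emph{in} $\F_q$, i.e.\ modulo $p$, and modulo $p$ this information is too weak: Newton's identities cannot recover the elementary symmetric functions at indices divisible by $p$, and the Frobenius relation $S_{pk}=S_k^{\,p}$ patches the power sums but not the symmetric-function bookkeeping you actually need. The paper avoids this by proving the strictly stronger $p$-adic statement $S_k\equiv 0\pmod{pk}$ for a Teichm\"uller lift: after discarding monomials with an exponent not divisible by $q-1$, every surviving monomial of $g^k$ has at least $n-\lfloor dk/(q-1)\rfloor$ exponents equal to zero, so the sum is divisible by $q^{\,n-\lfloor dk/(q-1)\rfloor}$, and a short valuation computation turns this into divisibility by $pk$. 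That extra divisibility is precisely what makes the exponential/logarithmic-derivative argument of Lemma~\ref{upperboundD2} work in all characteristics; nothing in your mod-$p$ setup supplies it.

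The second missing idea is the one you explicitly rejected. You dismiss the reduction to a univariate map $g:\F_{q^n}\to\F_{q^n}$ because the degree of $g$ blows up and (\ref{Wan's bound}) then gives nothing. That is true, but the paper never uses the degree of $g$: it uses the refinement $\vert V_g\vert\le q^n-U(g)$ from \cite{WSC}, where $U(g)$ is the first $k$ with $S_k(g)\not\equiv 0\pmod{pk}$, and then bounds $U(g)$ from below directly from the \emph{multivariate} total degree $d$ via the computation above, obtaining $U(g)\ge n(q-1)/d$ when $d\ge n$ and $U(g)\ge q$ when $d<n$. So the encoding into $\F_{q^n}$ is lossless once one replaces ``degree'' by $U(g)$. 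Your alternative --- doing the elementary-symmetric-function bookkeeping directly in $\F_q[Y_1,\dots,Y_n]$ --- is not carried out, and it is unclear what should play the role of $\prod_{v\in V_f}(T-v)$ for a subset of $\F_q^n$: there is no low-degree auxiliary polynomial cutting out $V_f$ whose coefficients are controlled by your moments. As written, the proposal is a correct diagnosis of the difficulties together with an unproved plan; the two ideas that close it (the invariant $U$ transported through the field encoding, and the $q$-power divisibility of the lifted power sums) are both absent.
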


In the special case when $n=1$, the bound in (\ref{upperboundD}) reduces to the bound (\ref{Wan's bound})
proved in \cite{W1} for the case of a univariate polynomial:

\begin{equation}\label{Wan's bound}
 \vert V_f \vert \leq q -{q-1\over d}. 
\end{equation}

Based on computer
calculations, the bound in (\ref{Wan's bound}) was first conjectured by Mullen \cite{Mu}. 
The original proof of (\ref{Wan's bound}) in \cite{W1} is elementary, and uses power symmetric functions and 
involves a $p$-adic lifting lemma. A significantly simpler proof of (\ref{Wan's bound}) is given 
by Turnwald \cite{Tu}, who uses elementary symmetric functions 
instead of power symmetric functions
and works directly over the finite field ${\F_q}$ without $p$-adic liftings.
Independently and later, Lenstra \cite{Len} showed one of us another simple proof
which uses power symmetric functions in characteristic zero
and avoids the use of the $p$-adic lifting lemma.

The proof of (\ref{Wan's bound}) gives a stronger result as shown in \cite{WSC}. 
This information will be used later to prove the higher dimensional Theorem~\ref{multiValue1}.
We first recall the relevant one dimensional result in \cite{WSC}. 
Let $\mathbb{Z}_q$ denote the ring of $p$-adic integers with uniformizer $p$ and residue field  
${\F_q}$. Let $f$ be a polynomial in $\F_q[x]$ of degree $d>0$.
For a fixed lifting $\tilde{f}(x)\in \mathbb{Z}_q[x] $ of $f$ and a 
fixed lifting
${ L}_q \subset \mathbb{Z}_q$ of ${\F}_q$, we define $U(f)$ to be the smallest positive
integer $k$ such that
\begin{equation} \label{powersum}
S_k(f) =\sum_{x\in {L}_q} \tilde{f}(x)^k \not\equiv 0~({\rm mod}~pk). 
\end{equation}
The number $U(f)$ exists (see the proof of Lemma~\ref{upperboundD2} below) and is 
easily seen to be independent of the choice of the
liftings $\tilde{f}(x)$ and ${L}_q$. One checks from the definition that
$U(f)\geq (q-1)/d$. Thus, we have the inequality, 
$$ \frac{q-1}{d} \leq U(f) \leq q-1.$$
The following improvement of (\ref{Wan's bound})
is given in \cite{WSC}: 

\begin{lemma}\label{upperboundD2}
  If $\vert V_f\vert <q$, then
\[\vert V_f\vert \leq q-U(f). \]
\end{lemma}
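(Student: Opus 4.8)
The plan is to pass from the $p$-adic power sums $S_k(f)$ to the power sums of the \emph{value multiset} of $f$, and then to run an argument in the spirit of the proof of (\ref{Wan's bound}).

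\emph{Step 1: a $p$-adic lifting congruence.} Take $L_q=\{0\}\cup\mu_{q-1}$, the Teichm\"uller lift; for $a\in\fq$ let $\tilde a\in L_q$ be its Teichm\"uller lift and $m_a=|f^{-1}(a)|$, so $m_a\ge 1$ iff $a\in V_f$ and $\sum_a m_a=q$. I claim that for every $k\ge 1$,
\begin{equation}\label{plan-lift}
S_k(f)\ \equiv\ P_k:=\sum_{a\in V_f} m_a\,\tilde a^{\,k}\pmod{pk}\qquad\text{in }\mathbb{Z}_q .
\end{equation}
For $\zeta\in L_q$ with reduction $\bar\zeta$ one has $\tilde f(\zeta)\equiv f(\bar\zeta)\equiv\widetilde{f(\bar\zeta)}\pmod p$, say $\tilde f(\zeta)=\widetilde{f(\bar\zeta)}+p\beta$; expanding $\tilde f(\zeta)^k$ by the binomial theorem and using $v_p\!\big(\binom{k}{j}p^{\,j}\big)\ge v_p(k)-v_p(j)+j\ge 1+v_p(k)=v_p(pk)$ for $1\le j\le k$ (as $j-v_p(j)\ge 1$), one gets $\tilde f(\zeta)^k\equiv\widetilde{f(\bar\zeta)}^{\,k}\pmod{pk}$; summing over $\zeta\in L_q$ gives (\ref{plan-lift}). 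Since the difference in (\ref{plan-lift}) has valuation $\ge 1+v_p(k)$, we have $S_k(f)\not\equiv 0\ (\mathrm{mod}\ pk)\iff v_p(P_k)<1+v_p(k)$; hence $U(f)$ depends only on the multiset $\{f(c):c\in\fq\}$ and (see Step 2) is finite, in fact $\le q-1$. Consequently the Lemma is equivalent to the assertion: if $r:=q-|V_f|\ge 1$, then $v_p(P_k)<1+v_p(k)$ for some $k$ with $1\le k\le r$.

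\emph{Step 2: the main case.} For $1\le k\le q-2$ one has $\sum_{a\in\fq}\tilde a^{\,k}=0$, whence $P_k=\sum_{a\in\fq}c_a\,\tilde a^{\,k}$ with $c_a=m_a-1$ for $a\in V_f$ and $c_a=-1$ for $a\notin V_f$; here $\sum_a c_a=0$, $\sum_a|c_a|=2r$, and $c_b=-1$ for each of the $r$ omitted values $b$. Thus $(P_k)_{k\ge1}$ is periodic of period $q-1$ and satisfies the linear recurrence with characteristic polynomial $\prod_{a\ne0,\ p\nmid c_a}(T-\tilde a)\mid T^{q-1}-1$; its order $N:=\#\{a\ne0:\ p\nmid c_a\}$ satisfies $r-1\le N\le 2r$, the lower bound because $c_b=-1$ for omitted $b\ne0$ (this also shows $(P_k\bmod p)_{k\ge1}\not\equiv 0$, so $U(f)\le q-1$). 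Now assume, toward a contradiction, that $U(f)>r$; then $v_p(P_k)\ge 1+v_p(k)$, so $\overline{P_k}=0$ in $\fq$, for $1\le k\le r$. When $N\le r$ (the generic situation, e.g.\ whenever all but a few values are taken exactly once), the $r\ge N$ consecutive vanishings $\overline{P_1}=\dots=\overline{P_N}=0$ form a consecutive-exponent Vandermonde system in the $N$ distinct nonzero points $\tilde a$ with $p\nmid c_a$; its determinant is nonzero, forcing $\overline{c_a}=0$ for all $a\ne0$ and contradicting $\overline{c_b}=-1$. If moreover $0\notin V_f$ (or $p\mid m_0$) then $\overline{P_{q-1}}=0$ as well, so by periodicity one has $r+1$ cyclically consecutive vanishings and the same argument also disposes of $N\le r+1$.

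\emph{Step 3: the remaining case --- the main obstacle.} What is left is $N\gg r$ --- many values taken with multiplicity $\not\equiv 1\pmod p$ --- the extreme being $p\mid m_a$ for every $a$, where all $\overline{P_k}$ vanish and the mod-$p$ information is empty. Here one must use (\ref{plan-lift}) modulo $pk$ in full: the valuations $v_p(P_k)$ are forced to grow along $p\mathbb Z, p^2\mathbb Z,\dots$, but cannot keep pace with $1+v_p(k)$. Concretely I would feed this into Newton's identities for the elementary symmetric functions of the Teichm\"uller-lifted value multiset --- equivalently, study $G(T)=\prod_{c\in\fq}\big(T-\widetilde{f(c)}\big)=\prod_{a}(T-\tilde a)^{m_a}$ modulo successive powers of $p$ --- to extract divisibility constraints, and couple this with a descent on $e$ in $q=p^e$: in the case $p\mid m_a$ for all $a$, replace each $m_a$ by $m_a/p$ (so $\sum m_a$ drops to $q/p$, forcing $|V_f|\le q/p$ and, after finitely many steps, a value with $p\nmid m_a$), tracking how the target inequality weakens at each stage. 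Carrying out this $p$-adic bookkeeping cleanly --- and in particular locating the required $k\le r$, which may be as large as $r$ itself --- is the step I expect to be hardest; it is exactly where Wan's $p$-adic lifting technique is needed rather than a purely characteristic-$p$ argument, and it is what forces the bound to be phrased through $U(f)$. That the bound is sharp in this regime is already visible for $f(x)=x^p-x$ over $\mathbb F_{p^2}$, where $|V_f|=p$ and $U(f)=p(p-1)=q-|V_f|$, the defining inequality $v_p(P_k)<1+v_p(k)$ holding only at $k=p(p-1)$, where $p\mid k$.
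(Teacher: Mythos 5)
Your Step 1 is correct and your Step 2 handles a genuine (and generic) special case, but the proof as a whole has a gap that you yourself flag in Step 3: the case $N>r$ --- more than $r$ residues $a$ with $m_a\not\equiv 1\pmod p$, the extreme being $p\mid m_a$ for all $a\in V_f$ --- is not proved, only sketched as a hoped-for descent through Newton's identities and the tower $q=p^e$. This is not a minor loose end: it is exactly the regime where the bound is sharp (your own example $f(x)=x^p-x$ over $\mathbb{F}_{p^2}$ lives there), so any correct proof must handle it, and the mod-$p$ Vandermonde mechanism of Step 2 is structurally unable to, because reducing $P_k$ modulo $p$ destroys all information when every multiplicity is divisible by $p$. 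The proposed fix (track $v_p(P_k)$ against $1+v_p(k)$ via elementary symmetric functions and descend on $e$) is plausible in spirit but is precisely the hard $p$-adic bookkeeping; until it is carried out the Lemma is not proved.

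The paper's proof sidesteps this entirely by not reducing modulo $p$ at the level of power sums. It packages the hypothesis ``$S_k(f)\equiv 0\pmod{pk}$ for all $k\le w$'' into the single congruence
$$\prod_{x\in L_q}\bigl(1-\tilde f(x)T\bigr)=\exp\Bigl(-\sum_{k\ge 1}\tfrac{S_k(f)}{k}T^k\Bigr)\equiv 1 \pmod{(p,\,T^{w+1})},$$
using only that $p^k/k!$ is divisible by $p$. Reducing mod $p$ \emph{after} exponentiating gives $\prod_{x\in\F_q}(1-f(x)T)\equiv \prod_{y\in\F_q}(1-yT)\pmod{T^{w+1}}$, and then cancelling the $\vert V_f\vert$ common linear factors leaves two polynomials of degree at most $w$ that agree mod $T^{w+1}$, hence are equal. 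The multiplicities $m_a$ never need to be inspected: they are absorbed into the product, and the common-factor cancellation does for free what your Steps 2--3 attempt by cases. If you want to salvage your approach, the cleanest route is to replace the power sums $P_k$ by the generating function $\prod_a(1-\tilde a T)^{m_a}$ from the start; otherwise Step 3 must be completed before the argument constitutes a proof.
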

 
\begin{proof}
To be self-contained, we give a simpler proof of this lemma using  ideas of
Lenstra and Turnwald,  closely following the version given by Lenstra \cite{Len}.
Note that in this lemma we are dealing with a polynomial $f$ in
one variable.

Let $w=q-\vert V_f\vert $. Assume $\vert V_f\vert  >q-U(f)$, 
that is, $w<U(f)$,  where we define $U(f)=\infty$
if it does not exist. We need to prove that $f$ is bijective on $\F_q$.
By the definition of $U(f)$ and the assumption $w<U(f)$, we can write
$$\sum_{k=1}^{\infty} {S_k(f) \over k} T^k \equiv pg(T) ~({\rm mod}~T^{w+1})$$
for some polynomial $g\in \mathbb{Z}_q[T]$.
This together with the logarithmic derivative identity 
$$\prod_{x\in { L}_q} (1-\tilde{f}(x)T) =\exp( -\sum_{k=1}^{\infty} 
{S_k(f)\over k} T^k ) $$
shows that
$$\prod_{x\in { L}_q} (1-\tilde{f}(x)T)
\equiv \exp( -pg(T))~ ({\rm mod}~T^{w+1}) 
\equiv 1~({\rm mod}~(p, T^{w+1})),$$
where in the last congruence we used the fact that
$p^k/k!$ is divisible by $p$ for every positive integer $k$. Reducing this
congruence modulo $p$,
one obtains
$$\prod_{x\in {\F}_q} (1-f(x)T) \equiv 1 ~({\rm mod}~T^{w+1}).$$
On the other hand, since $f$ is not a constant, we have $w<q-1$ and 
$$\prod_{y\in {\F}_q} (1-yT) = 1-T^{q-1} \equiv 1~({\rm mod}~T^{w+1}).$$
Thus,
$$\prod_{x\in {\F}_q} (1-f(x)T) \equiv \prod_{y\in {\F}_q }(1-yT)
~({\rm mod}~T^{w+1}).$$
By hypothesis, the two products have exactly $\vert V_f\vert $ factors in common. 
Removing the $\vert V_f\vert $ common factors which are invertible modulo $T^{w+1}$,
we obtain two polynomials of degree at most $w$ which are congruent
modulo $T^{w+1}$, and therefore identical. Multiplying
the removed factors back in, we conclude that
$$\prod_{x\in {\F}_q} (1-f(x)T)=\prod_{y\in {\F}_q }(1-yT).$$
This proves that $f$ is bijective on $\F_q$
as required. 
\end{proof}

We use Lemma~\ref{upperboundD2} to prove Theorem~\ref{multiValue1}.  Recall that
$f$ is now the polynomial vector in (\ref{defpolyvec}).
Let $e_1,\dots, e_n$ be a basis of the extension field ${ \F}_{q^n}$
over $ \F_q$. Write $x=x_1e_1+\cdots +x_ne_n$ and 
$$g(x)=f_1(x_1,\dots, x_n)~e_1 +\cdots +f_n(x_1,\dots, x_n)~ e_n. $$
The function $g$ induces a non-constant univariate polynomial map from the finite field
${\F}_{q^n}$ into itself.  
Furthermore, one has the equality 
$\vert V_f\vert =|g({\F}_{q^n})|$. 
We do not have a good control on the degree of $g$ as a univariate polynomial and thus we cannot use 
the univariate bound (\ref{Wan's bound}) directly. The following lemma gives a lower bound 
for $U(g)$, which is enough to prove Theorem~\ref{multiValue1}.

\begin{lemma} If $d\geq n$, we have the inequality 
$$\frac{n(q-1)}{d} \leq U(g) < q^n.$$
If $d< n$, we have the inequality 
$$ q \leq U(g) < q^n.$$
\end{lemma}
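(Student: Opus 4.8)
The plan is to establish the lower bound on $U(g)$ directly from the definition, by verifying that the power sum $S_k(g)$ is divisible by $pk$ (that is, $v_p(S_k(g))\ge 1+v_p(k)$ in $\mathbb{Z}_{q^n}$) for every $k$ below the asserted cutoff; the upper bound $U(g)<q^n$ is then immediate, since $g$ induces a non-constant polynomial map on $\mathbb{F}_{q^n}$ and so the recalled inequality $U(h)\le |K|-1$ for a non-constant $h\in K[x]$, applied with $K=\mathbb{F}_{q^n}$, gives $U(g)\le q^n-1$.

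Since $U(g)$ is independent of the choice of lifts, I would compute $S_k(g)$ with a convenient one. Take $L_q\subset\mathbb{Z}_q$ to be the Teichm\"uller set, i.e.\ $0$ together with the $(q-1)$-st roots of unity; fix lifts $\tilde e_i\in\mathbb{Z}_{q^n}$ of $e_1,\dots,e_n$ and use $L_{q^n}=\{x_1\tilde e_1+\cdots+x_n\tilde e_n:x_i\in L_q\}$; fix lifts $\tilde f_i\in\mathbb{Z}_q[x_1,\dots,x_n]$ of the $f_i$; and take the lift of $g$ determined by $\tilde g(x_1\tilde e_1+\cdots+x_n\tilde e_n)=\tilde f_1(x_1,\dots,x_n)\tilde e_1+\cdots+\tilde f_n(x_1,\dots,x_n)\tilde e_n$. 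Then
\[
S_k(g)=\sum_{(x_1,\dots,x_n)\in L_q^n}\Bigl(\sum_{i=1}^n\tilde f_i(x_1,\dots,x_n)\,\tilde e_i\Bigr)^{k},
\]
which is the sum over $L_q^n$ of a polynomial $P_k\in\mathbb{Z}_{q^n}[x_1,\dots,x_n]$ of total degree at most $dk$, because $\sum_i\tilde f_i\tilde e_i$ has total degree at most $d=\max_i d_i$. Expanding $P_k$ into monomials and using the elementary identity that $\sum_{x\in L_q}x^{j}$ equals $q$ when $j=0$, equals $q-1$ when $j\ge1$ and $(q-1)\mid j$, and equals $0$ otherwise, one sees that a monomial $c\,x_1^{a_1}\cdots x_n^{a_n}$ of $P_k$ contributes $c\prod_{i=1}^n\bigl(\sum_{x\in L_q}x^{a_i}\bigr)$ to $S_k(g)$; this vanishes unless every $a_i$ is $0$ or a positive multiple of $q-1$, and in that case it is divisible by $q^{r}$, where $r=\#\{i:a_i=0\}$.

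The crucial observation is that every monomial of $P_k$ has $a_1+\cdots+a_n\le dk$, so once $dk<n(q-1)$ the exponents $a_i$ cannot all be $\ge q-1$; hence every surviving monomial has $r\ge1$, and therefore $q$ divides each term of $S_k(g)$, giving $v_p(S_k(g))\ge m$ where $q=p^m$. Now split into cases. If $d\ge n$ and $1\le k<n(q-1)/d$, then $dk<n(q-1)$ and also $k<n(q-1)/d\le q-1<q=p^m$, so $v_p(k)\le m-1$; hence $v_p(S_k(g))\ge m\ge 1+v_p(k)$, i.e.\ $pk\mid S_k(g)$. Since this holds for all such $k$, $U(g)\ge n(q-1)/d$. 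If $d<n$ and $1\le k<q$, then $dk\le (n-1)(q-1)<n(q-1)$ and again $v_p(k)\le m-1$, so the same computation yields $pk\mid S_k(g)$ for all $k<q$, whence $U(g)\ge q$.

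I expect the only subtle point to be the step ``$q$ divides every term of $S_k(g)\Rightarrow pk\mid S_k(g)$'': it uses nothing more than $k<q=p^m$, so that $v_p(k)\le m-1$ is absorbed by the single factor of $q$ coming from a vanishing exponent — and this is exactly why the cutoff must be $\min\{n(q-1)/d,\,q\}$ rather than $n(q-1)/d$ alone. Everything else is the multivariate bookkeeping version of the one-variable computation behind $U(f)\ge(q-1)/d$ (where one gets the clean formula $S_k(f)=q\,\tilde f(0)^k$ once $dk<q-1$); the only new phenomenon is that many monomials now survive the character sum over $L_q^n$, but each still carries a factor $q$ because at least one of its exponents is forced to be zero.
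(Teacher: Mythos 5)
Your proof is correct and follows essentially the same route as the paper's: both use the Teichm\"uller lifting, expand $g^k$ into monomials of total degree at most $dk$, observe that a surviving monomial must have every exponent divisible by $q-1$ and hence (when $dk<n(q-1)$) at least one exponent equal to zero, and then absorb $v_p(k)\le v_p(q)-1$ into the resulting factor of $q$. The only cosmetic difference is that the paper records the sharper divisibility $q^{\,n-\lfloor dk/(q-1)\rfloor}$ before using just one factor of $q$, exactly as you do.
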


{\sl Proof}.  The upper bound is trivial. We need to prove the lower bound. 
We may assume that $g(x_1e_1+\cdots +x_ne_n)$ is already lifted to
characteristic zero and has total degree $d$ when viewed as a polynomial 
in the $n$ variables $ x_1,\dots, x_n$. Furthermore, we can assume that the 
coefficients of $g$ as a polynomial in $n$ variables are either zero or roots 
of unity, that is, we use the Teichm\"uller lifting for the coefficients. Let $L_q$ denote the 
Teichm\"uller lifting of $\mathbb{F}_q$. 

Let $k$ be a positive integer such that $k<n(q-1)/d$ if $d\geq n$ and $k< q$ if $d< n$. We need to prove the claim that 
$$S_k(g)=\sum_{(x_1,\cdots, x_n)\in { L}_q^n} g(x_1e_1+\cdots +x_ne_n)^k \equiv
0~({\rm mod}~pk).$$
Expand 
$g(x_1e_1+\cdots +x_ne_n)^k$
as a polynomial in the $n$ variables $x_1,\dots, x_n$. 
Let 
$$M(x_1,\ldots, x_n) = ax_1^{u_1}\cdots x_n^{u_n}$$
be a typical non-zero monomial in $g^k$. It suffices to prove that 
$$\sum_{(x_1,\ldots, x_n) \in L_q^n} x_1^{u_1}\cdots x_n^{u_n} \equiv  0 \pmod{pk}.$$
The sum on the left side is zero if one of the $u_i$ is not divisible by $q-1$. 
Thus, we shall assume that all $u_i$'s are divisible by $q-1$. 
The total degree 
$$u_1+\cdots +u_n \leq dk. $$ 
Thus, there are at least $n -\lfloor dk/(q-1)\rfloor$ of the $u_i$'s which are zero.  
This implies that 
$$S_k(g) \equiv 0~({\rm mod}~ q^{ n - \lfloor dk/(q-1) \rfloor}).$$ 
Let $v_p$ denote the $p$-adic valuation satisfying $v_p(p)=1$.  If the inequality 
$$v_p(q) ( n - \lfloor kd/(q-1)\rfloor) \geq 1 + v_p(k)$$ 
is satisfied, then the claim is true and we are done.  

In the case that $d< n$ and $k<q$, we have $dk/(q-1) < n$ and $v_p(k) < v_p(q)$. Thus, 
$$v_p(q) ( n - \lfloor kd/(q-1) \rfloor) \geq v_p(q) \geq 1 + v_p(k).$$ 
In the case $d\geq n$ and $k < n(q-1)/d$, we have 
$$k < \frac{n(q-1)}{d} < q.$$
It follows that $v_p(k)< v_p(q)$. 
Since $kd/(q-1) < n$,  we deduce 
$$v_p(q) ( n - \lfloor kd/(q-1) \rfloor) \geq v_p(q) \geq 1 + v_p(k).$$ 
The proof is complete.   $\square$

{\bf Remark}.  For a sharp example, we may take $n=d=2$
and $f(x_1,x_2)=(x_1, x_1x_2)$. This is a birational morphism from $\mathbb{A}^2$ to $\mathbb{A}^2$, 
but not a finite  morphism.  Asymptotic upper bounds for value sets of non-exceptional finite morphisms are given 
in  \cite{GW}.

\end{document}